\newcommand{\nc}{\newcommand}
\newtheorem{thm}{Theorem}[section]
\newtheorem{prop}[thm]{Proposition}
\newtheorem{lemma}[thm]{Lemma}
\newtheorem{corollary}[thm]{Corollary}
\newtheorem{definition}[thm]{Definition}
\newenvironment{defin}{\begin{definition} \rm}{\end{definition}}
\newenvironment{lem}{\begin{lemma}\rm }{\end{lemma}}
\nc{\Ext}{\operatorname{Ext}}
\nc{\FS}{\operatorname{FS}}
\nc{\NS}{\operatorname{NS}}
\nc{\Amp}{\operatorname{Amp}}
\nc{\Pic}{\operatorname{Pic}}
\nc{\Kom}{\operatorname{Kom}}
\nc{\DGrB}{\operatorname{DGrB}}
\nc{\antidiag}{\operatorname{antidiag}}
\nc{\diag}{\operatorname{diag}}
\nc{\mo}{\operatorname{mod}}
\nc{\Gr}{\operatorname{Gr}}
\nc{\Rep}{\operatorname{Rep}}
\nc{\Perf}{\operatorname{Perf}}
\nc{\Hom}{\operatorname{Hom}}
\nc{\Sym}{\operatorname{Sym}}
\nc{\RHom}{R\operatorname{Hom}}
\nc{\cRHom}{\operatorname{\mathcal{R}\mathcal{H}om}}
\nc{\cHom}{\operatorname{\mathcal{H}om}}
\nc{\End}{\operatorname{End}}
\nc{\Coh}{\operatorname{Coh}}
\nc{\Aut}{\operatorname{Aut}}
\nc{\Td}{\operatorname{Td}}
\nc{\Coker}{\operatornamoe{Coker}}
\nc{\coker}{\operatorname{coker}}
\nc{\colim}{\operatorname{colim}}
\nc{\Ker}{\operatorname{Ker}}
\nc{\img}{\operatorname{Im}}
\nc{\D}{\operatorname{D}}
\nc{\ch}{\operatorname{ch}}
\nc{\Stab}{\operatorname{Stab}}
\nc{\SL}{\operatorname{SL}}
\nc{\rk}{\operatorname{rk}}
\nc{\GL}{\operatorname{GL}}
\nc{\Log}{\mathop{\mathrm{Log}}}
\nc{\abs}[1]{\lvert#1\rvert}
\nc{\Cone}{\operatorname{Cone}}
\nc{\id}{\operatorname{id}}
\nc{\Li}{\operatorname{Li}}
\nc{\fmod}{\operatorname{fmod}}
\nc{\SdR}{\operatorname{SdR}}
\newcommand{\Db}{{\mathrm D}^{b}}
\nc{\cA}{{\mathcal A}}
\nc{\cB}{{\mathcal B}}
\nc{\cC}{{\mathcal C}}
\nc{\cD}{{\mathcal D}}
\nc{\cE}{{\mathcal E}}
\nc{\cF}{{\mathcal F}}
\nc{\cG}{{\mathcal G}}
\nc{\cH}{{\mathcal H}}
\nc{\cI}{{\mathcal I}}
\nc{\cJ}{{\mathcal J}}
\nc{\cK}{{\mathcal K}}
\nc{\cL}{{\mathcal L}}
\nc{\cM}{{\mathcal M}}
\nc{\cN}{{\mathcal N}}
\nc{\cO}{{\mathcal O}}
\nc{\cP}{{\mathcal P}}
\nc{\cQ}{{\mathcal Q}}
\nc{\cR}{{\mathcal R}}
\nc{\cS}{{\mathcal S}}
\nc{\cT}{{\mathcal T}}
\nc{\cU}{{\mathcal U}}
\nc{\cV}{{\mathcal V}}
\nc{\cW}{{\mathcal W}}
\nc{\cX}{{\mathcal X}}
\nc{\cY}{{\mathcal Y}}
\nc{\cZ}{{\mathcal Z}}
\nc{\bA}{{\mathbb A}}
\nc{\bB}{{\mathbb B}}
\nc{\bC}{{\mathbb C}}
\nc{\bD}{{\mathbb D}}
\nc{\bE}{{\mathbb E}}
\nc{\bF}{{\mathbb F}}
\nc{\bG}{{\mathbb G}}
\nc{\bH}{{\mathbb H}}
\nc{\bI}{{\mathbb I}}
\nc{\bJ}{{\mathbb J}}
\nc{\bK}{{\mathbb K}}
\nc{\bL}{{\mathbb L}}
\nc{\bM}{{\mathbb M}}
\nc{\bN}{{\mathbb N}}
\nc{\bO}{{\mathbb O}}
\nc{\bP}{{\mathbb P}}
\nc{\bQ}{{\mathbb Q}}
\nc{\bR}{{\mathbb R}}
\nc{\bS}{{\mathbb S}}
\nc{\bT}{{\mathbb T}}
\nc{\bU}{{\mathbb U}}
\nc{\bV}{{\mathbb V}}
\nc{\bW}{{\mathbb W}}
\nc{\bX}{{\mathbb X}}
\nc{\bY}{{\mathbb Y}}
\nc{\bZ}{{\mathbb Z}}
\begin{document}
\title{Mirror stability conditions and SYZ conjecture for Fermat
    polynomials} \author{So Okada\footnote{Supported by JSPS
    Grant-in-Aid and Global Center of Excellence Program at Kyoto
    University, Email: okada@kurims.kyoto-u.ac.jp, Address: Research
    Institute for Mathematical Sciences, Kyoto University, 606-8502,
    Japan. }}
\maketitle

\begin{abstract}
  Calabi-Yau Fermat varieties are obtained from moduli spaces of
  Lagrangian connect sums of graded Lagrangian vanishing cycles on
  stability conditions on Fukaya-Seidel categories. These graded
  Lagrangian vanishing cycles are stable representations of quivers 
  on their {\it mirror stability conditions}.
\end{abstract}

\section{Introduction}
For a projective variety $X$, Strominger-Yau-Zaslow conjectured that
we can obtain the space $X$ as a moduli space of special Lagrangians
with $U(1)$ connections when we have a mirror pair of $X$ and argued
locally and physically.  The author recommends \cite{Aur} for a recent
illustration on the subject and \cite{HKKPTVVZ} for a general
reference.
 
In this paper, we study the conjecture categorically in the framework
of homological mirror symmetry of Kontsevich \cite{Kon95} and
stability conditions of Bridgeland \cite{Bri07}. The latter notion,
which was inspired by Douglas' so-called $\Pi$-stabilities in
superstring theory \cite{Dou02,Dou01}, categorizes King's
$\theta$-stabilities \cite{Kin} (\cite[Section
5.3]{BriTol},\cite[Section 7.3.3]{Asp},\cite{Ber}).  In particular,
for stability conditions of quivers without relations, when deformed
on the stability manifold if necessarily, stable representations solve
self-dual equations twisted by characters of general linear groups
\cite[Section 6]{Kin}.

On derived categories defined in terms of graded Lagrangian vanishing
cycles over morsifications of Fermat polynomials
$X_{n}:=x_{1}^{n}+\ldots +x_{n}^{n}:\bC^{n}\to \bC$
\cite{Sei01,AurKatOrl08}, we put pairs of stability conditions which
are named {\it mirrors} in Definition \ref{def:mirror} and defined on
quivers with commuting relations with the following properties (see
Theorem \ref{thm:syz}).  For one in each pair, with framing in
Definitions \ref{def:rest} and \ref{def:rest_rep} in terms of a finite
group of tensor products of Auslander-Reiten transformations, we have
a moduli space of stable Lagrangian connect sums of graded Lagrangian
vanishing cycles; this moduli space via the Serre-de Rham functor in
Definition \ref{def:twisted} gives the Calabi-Yau Fermat variety in
$\bP^{-1}$ defined by the zero locus of $X_{n}$.  For the other one in
the pair, graded Lagrangian vanishing cycles are stable.

Stable objects above are stable representations of quivers even when
we forget relations. In particular, graded Lagrangian vanishing cycles
trivially satisfy self-dual equations.  In this paper, connections are
over transverse intersections of graded Lagrangian vanishing cycles
and connect trivial vector bundles on graded Lagrangian vanishing
cycles; so, they are maps between vector spaces and consistency with
grading make them representations of quivers with relations.
  
\section{Mirror stability conditions}\label{sec:mir}
Let us recall the notion of stability conditions.  For a triangulated
category $\cT$, each stability condition is determined by a bounded
$t$-structure and a stability function $Z$, which gives central
charges of elements of the Grothendieck group of $\cT$, on the heart
of the $t$-structure.

We go on with explicit examples for our later use.  Let $A_{n-1}$
denote the A-type Dynkin quiver of vertices $0,\ldots, n-1$ and arrows
$0\to \ldots \to n-1$ and let $A_{n-1}^{\otimes n}$ denote the
$n$-fold tensor product of $A_{n-1}$ (see \cite{Les} for the general
definition of tensor products of quivers).

Each vertex $a$ of $A_{n-1}^{\otimes n}$ can be labelled by $n$-tuples
$a^{1},\ldots, a^{n}$ for $0\leq a^{i} \leq n-1$. We have an arrow
$a\to b$ when $b^{i}-a^{i}=0$ for all $i$ but at most one $j$ such
that $b^{j}-a^{j}=1$; we let $\lambda(a,b)=j$ if we have such $j$ or
$\lambda(a,b)=0$ if $a=b$.  We have commuting relations on arrows. For
our convenience in this paper, for each vertex $a$ of
$A_{n-1}^{\otimes n}$, we call the number $\lambda(a):=\sum a^{i}$ the
{\it index} of $a$.

For example, the following figure shows commuting arrows on rectangles
which connect vertices of indices $0,1,2$ for $A_{4}^{\otimes 5}$.
\begin{figure}[H] 
  \begin{center} 
    \input{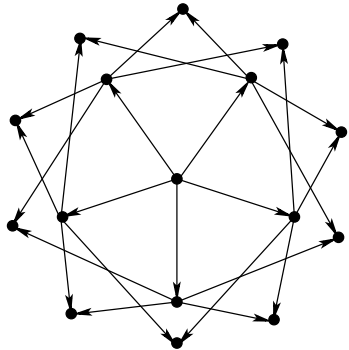}\label{fig:partial quintic quiver}
    \caption{A part of $A_{4}^{\otimes 5}$}
  \end{center}
\end{figure}    
   
For the triangulated category $\Db(\mo A_{n-1}^{\otimes n})$ and the
heart $\mo A_{n-1}^{\otimes n}$ of the bounded $t$-structure, we have
a stability function which maps simple representations into the
upper-half plane of the complex plane.  We mean by $Z_{n}$ a stability
function on the heart $\mo A_{n-1}^{\otimes n}$ with the following
conditions: slopes of simple representations, which are
one-dimensional representations over vertices, strictly decreases as
their indices increases and central charges of simple representations
of the same indices are the same.  Choices of such stability functions
gives the open submanifold of the stability manifold of $\Db(\mo
A_{n-1}^{\otimes n})$.

The following figure shows one of such stability functions on $\mo
A_{4}^{\otimes 5}$. Arrows indicate increases of coordinates of the
complex plane.  Central charges of simple representations put
dots. For example, central charges of simple representations of
vertices with the index $1$ put the second left-most dot.
 \begin{figure}[H] 
 \begin{center} 
  \input{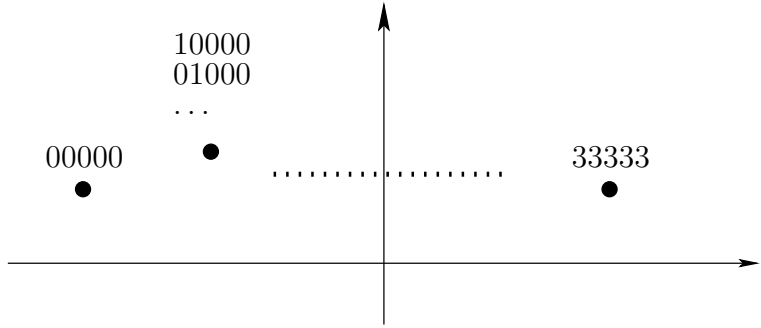}\label{fig:central_charge_p}
   \caption{A stability function $Z_{5}$ on $\mo A_{4}^{\otimes 5}$}
 \end{center}
\end{figure}    

For the stability function above, we have the stability condition of
$\Db(\mo A_{4}^{\otimes 5})$ with the stability function
$-\bar{Z_{5}}$ on the same heart $\mo A_{4}^{\otimes 5}$.
\begin{figure}[H]
   \begin{center}
   \input{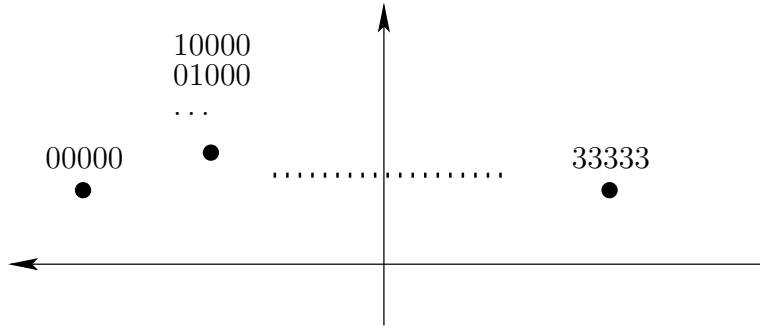}\label{fig:central_charge_q}
   \caption{The stability function $-\bar{Z_{5}}$ on $\mo A_{4}^{\otimes 5}$}
 \end{center}
\end{figure}
   
For our convenience in this paper, we call the stability condition of
$\Db(\mo A_{n-1}^{\otimes n})$ determined by the stability function
$-\bar{Z_{n}}$ on the heart $\mo A_{n-1}^{\otimes n}$ is {\it mirror}
to the stability condition determined by the stability function
$Z_{n}$ on the same heart $\mo A_{n-1}^{\otimes n}$.  Between these
mirrors, we have wall-crossing paths on the stability manifold of
$\Db(\mo A_{n-1}^{\otimes n})$.  We put the formal definition as
follows.

\begin{defin}\label{def:mirror}
  Let $\cA$ be the heart of a bounded $t$-structure of a triangulated
  category $\cT$ such that $\cA$ is isomorphic to the category of
  representations of a quiver with or without relations.  Let $Z$ be a
  stability function on the heart such that central charges of simple
  representations are in the upper-half plane of the complex plane. We
  call the stability function $-\bar{Z}$ on $\cA$ {\it mirror} to the
  stability function $Z$, and the stability condition with the
  stability function $Z$ on $\cA$ {\it mirror} to the other.
\end{defin}

For a nontrivial directed acyclic graph such as of $A_{n-1}^{\otimes
  n}$, we have a stability condition with stable objects being simple
representations, and we have the mirror stability condition with a
representation of a nontrivial support being a stable object.

We can state notions in Definition \ref{def:mirror} in a more general
way; however, without a finiteness property on the heart such as
\cite[Proposition 2.3]{Bri07}, even if we assume the existence of a
stability condition, the existence of the mirror stability condition
is obscure. Examples of mirror stability conditions in terms of
spherical functors or wall-crossings can be found in
\cite{BayMac,Bri09,Bri05,IshUedUeh,KajSaiTak,KonSoi,Mac,
  Ohk,Oka06a,Oka06b}.

\section{Recap of homological mirror symmetry}
In this paper, derived equivalence $\cong$ means that on both sides of
the equivalence, we have compact generating objects with
$A_{\infty}$-isomorphic $A_{\infty}$ enhancements; this is an example
of so-called derived Morita equivalence.

In \cite{Oka09}, we have seen that $X_{n}$ is self-dual up to the
equivariance with respect to the group $H_{n}$ which consists of
$n$-tuples of $n$-th roots of the unity modulo diagonals and which
acts on coordinates.  For example, we have $\Db_{H_{n}}(\Coh
X_{n})\cong \FS(X_{n})$ for the $H_{n}$-equivariant bounded derived
category of coherent sheaves of the Calabi-Yau Fermat variety and the
Fukaya-Seidel category of $X_{n}$ (see \cite{FutUed} for a different
account).

We have a morsification of $X_{n}$ and an $A_{\infty}$ algebra of
graded Lagrangian vanishing cycles in terms of Lagrangian intersection
theories such that the $A_{\infty}$ algebra is $A_{\infty}$-isomorphic
to the Yoneda ($\Ext$) algebra of simple representations of
$A_{n-1}^{\otimes n}$.  For simple representations $S_{a}$ and $S_{b}$
of vertices $a$ and $b$ of $A_{n-1}^{\otimes n}$ with an arrow $a\to
b$, we have an one-dimensional $\Ext^{1}(S_{a},S_{b})$ morphism and
these morphisms along arrows anti-commute.

For each $0\leq i \leq n-1$, let $O_{n}^{i}$ denote the object
$\Omega_{\bP^{n-1}}^{n-1-i}(n-1-i)[i]$ restricted on the Calabi-Yau
Fermat variety in $\bP^{n-1}$. Bases of
$\Ext^{1}(O_{n}^{i},O_{n}^{i+1})$ can be given by morphisms $d
x_{n,i}^{j}$ for $1\leq j\leq n$ such that we have anti-commuting
relations $d x_{n,i}^{j} d x_{n,i-1}^{j'}= - d x_{n,i}^{j'} d
x_{n,i-1}^{j}$ for $1\leq j, j'\leq n$.  Putting (weighted) copies of
$d x_{n,i}^{j}$ on arrows $a\to b$ such that $\lambda(a,b)=j$ and
$\lambda(a)=i$, it is easy to check that $H_{n}$-equivariant objects
of $O_{n}^{i}$ give also the Yoneda algebra.

For the dual statement of $\Db_{H_{n}}\Coh (X_{n})\cong \FS(X_{n})$,
enhancing the argument above, we have defined the $\hat{H}_{n}$
quotient $\FS^{\hat{H}_{n}}(X_{n})$ of $\FS(X_{n})$ as the perfect
derived category of the dg $\hat{H}_{n}$-orbit category (see
\cite{Kel05}) of a dg algebra $\cA$ which is $A_{\infty}$ isomorphic
to the $A_{\infty}$ algebra of objects in $\hat{H}_{n}$-orbits of
graded Lagrangian vanishing cycles so that the dg $\hat{H}_{n}$-orbit
category is a dg enhancement of the Yoneda algebra of objects
$O_{n}^{i}$.  This is a formal application of Pontryagin duality on
categories of dg categories, and $\Db(\Coh X_{n})\cong
\FS^{\hat{H}_{n}}(X_{n})$.  As for the existence of such $\cA$, we can
take the dg algebra of $\hat{H}_{n}$-graded matrix factorizations of
$O_{n}^{i}$ in terms of the dg category of graded matrix
factorizations of $X_{n}$ \cite{Orl}.
 
We have $\FS(X_{n})\cong \Db(\mo A_{n-1}^{\otimes n})\cong \Db(\mo
A_{n-1})^{\otimes n}$ in terms of dg tensor products (see
\cite{Kel06}).  We have tensor products of Auslander-Reiten
transformations $\tau$ of $\Db(\mo A_{n})$ such that $\tau^{n}\cong
[2]$. Actions of the finite group consisting of $\tau^{t_1}\otimes
\cdots \otimes \tau^{t_{n}}$ such that $\sum t_{i}=0$ coincides with
those of $\hat{H}_{n}$ above and the index of each simple
representation of $A_{n-1}^{\otimes n}$ stays the same by the actions.
   
\section{Main statement}
Directly, it is highly nontrivial to discuss stability conditions and
moduli spaces on complexes of $O_{n}^{i}$ with nontrivial $A_{\infty}$
structures to take into account (see \cite{DouGovJay}), even with
motivating realizations in terms of coherent sheaves or
Lagrangians.

By taking equivariance, we have stability conditions on $\mo
A_{n-1}^{\otimes n}$, and on $\Db_{H_{n}}(\Coh X_{n})$.  Still, an
issue we face to consider stability conditions on $\Db(\Coh X_{n})$
with stability conditions on $\mo A_{n-1}^{\otimes n}$ is that such
stability conditions are not invariant under $\hat{H}_{n}$; because,
not all objects which consist of the $\hat{H}_{n}$-orbit of a simple
representation of $ A_{n-1}^{\otimes n}$ are representations of
$A_{n-1}^{\otimes n}$.  If they were invariant, then we would have
taken advantages of the paper \cite{Pol} by Polishchuk and the paper
\cite{MacMehSte} by Macr\`{i}-Mehrotra-Stellari.

We overcome the issue by taking the notion of {\it framed
  $\hat{H}_{n}$-invariance} on stability conditions in Definition
\ref{def:rest} and representations and morphisms in Definition
\ref{def:rest_rep}; instead of full products of general linear groups
over vertices, we take ones which commutes with $\hat{H}_{n}$ actions
restricted on simple representations of $A_{n-1}^{\otimes n}$.

 \begin{defin}\label{def:rest}
   We say that a stability function $Z$ on $\mo A_{n-1}^{\otimes n}$
   is {\it framed $\hat{H}_{n}$-invariant}, if central charges of
   simple representations of $ A_{n-1}^{\otimes n}$ of each index are
   the same.
 \end{defin}
 
 For example, stability functions $Z_{i}$ and their mirrors in Section
 \ref{sec:mir} are framed $\hat{H}_{n}$-invariant.  To define framed
 $\hat{H}_{n}$-invariant representations, for each representation $E$
 of $A_{n-1}^{\otimes n}$, let $E_{a,b}:E_{a}\to E_{b}$ denote
 commuting linear maps along arrows $a\to b$; in particular, $E_{a,a}$
 are identity maps on $E_{a}$.
 
 \begin{defin}\label{def:rest_rep}
   We say that a representation $E$ of $A_{n-1}^{\otimes n}$ is {\it
     framed $\hat{H}_{n}$-invariant}, if for vertices $a,b$ of
   $A_{n-1}^{\otimes n}$ with the same indices, we have {\it framing
     isomorphisms} $\phi_{a,b}:E_{a}\to E_{b}$ with the following
   conditions. For vertices $a,b,c, a',c'$ such that $a\to a'$ and
   $c\to c'$ with $\lambda(a,a')=\lambda(c,c')$ and
   $\lambda(a)=\lambda(b)=\lambda(c)$, we have
   $\phi_{a',c'}E_{a,a'}=E_{c,c'}\phi_{b,c}\phi_{a,b}$; i.e., we have
   the following commuting diagram such that squig arrows indicate
   isomorphisms and plain arrows indicate maps of a representation.
 \begin{equation*}
   \begin{matrix}
     a          & \rightsquigarrow & b & \rightsquigarrow &c\\
    \downarrow &      &   \circlearrowright     &          &\downarrow \\
     a'          & & \rightsquigarrow&     & c'
   \end{matrix}.
   \end{equation*}

   For representations $E$ and $F$ of $A_{n-1}^{\otimes n}$ with
   framed $\hat{H}_{n}$-invariance, we say that a morphism $f:E\to F$
   of $A_{n-1}^{\otimes n}$ is {\it framed $\hat{H}_{n}$-invariant},
   if for vertices $b,b'$ such that $\lambda(b)=\lambda(b')$, we have
   $\phi^{F}_{b,b'}f_{b}\phi^{E}_{b',b}=f_{b'}$.
   
   Let $\fmod A_{n-1}^{\otimes n}$ denote the {\it category of framed
     $\hat{H}_{n}$-invariant representations} which consists of framed
   $\hat{H}_{n}$-invariant representations and morphisms of
   $A_{n-1}^{\otimes n}$.
 \end{defin}

 In Definition \ref{def:rest_rep}, by putting $a=c$ and $a'=c'$, we
 see that $\phi_{b,c}\phi_{a,b}=\phi_{a,c}$, by putting $a=c$, we have
 $\phi_{b,a}\phi_{a,b}=\phi_{a,a}$, and by putting $a=b=c$, we have
 $\phi_{a,a}=1_{E_{a}}$.  We have uniqueness of framing isomorphisms
 in the following sense.

 \begin{lem}\label{lem:triv}
   Let $E$ be a framed $\hat{H}_{n}$-invariant representation of
   $A_{n-1}^{\otimes n}$ supported over vertices with indices
   $0,\ldots, n-1$. For vertices $a_{i}$ of $A_{n-1}^{\otimes n}$ such
   that $\lambda(a_{i})=i$ for $0\leq i \leq n-1$, we have a framed
   $\hat{H}_{n}$-invariant isomorphism $t^{E}:E\to E'$ such that $E'$
   is a framed $\hat{H}_{n}$-invariant representation of
   $A_{n-1}^{\otimes n}$ with trivial framing isomorphisms and
   $t^{E}_{a}=\phi_{a,a_{\lambda(a)}}$ for vertices $a$ of
   $A_{n-1}^{\otimes n}$.
 \end{lem}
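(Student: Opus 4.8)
The plan is to construct the target representation $E'$ explicitly from $E$ by transporting everything along the chosen base vertices $a_i$, and then to check that the resulting maps $t^E_a := \phi_{a,a_{\lambda(a)}}$ assemble into a morphism of representations of $A_{n-1}^{\otimes n}$, that this morphism is an isomorphism, and that it is framed $\hat{H}_n$-invariant. First I would define $E'_a := E_{a_{\lambda(a)}}$ for every vertex $a$; for an arrow $a\to a'$ with $\lambda(a,a')=j$ and $\lambda(a)=i$ I would set $E'_{a,a'}:=\phi_{a',a_{i+1}}\,E_{a,a'}\,\phi_{a_i,a}$, and declare the framing isomorphisms $\phi'_{a,b}$ of $E'$ to be the identity maps whenever $\lambda(a)=\lambda(b)$ (this is well-defined since both equal $E_{a_{\lambda(a)}}$). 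One must verify $E'$ is a representation, i.e.\ that the maps $E'_{a,a'}$ commute over the defining rectangles of $A_{n-1}^{\otimes n}$; this follows from the commuting of $E$ together with the cocycle identities $\phi_{b,c}\phi_{a,b}=\phi_{a,c}$ and $\phi_{a,a}=1$ derived right after Definition \ref{def:rest_rep}.

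Next I would check that $t^E$ is a morphism $E\to E'$: for an arrow $a\to a'$ with $\lambda(a,a')=j$, $\lambda(a)=i$, naturality requires $t^E_{a'}E_{a,a'} = E'_{a,a'} t^E_a$, i.e.\ $\phi_{a',a_{i+1}}E_{a,a'} = \phi_{a',a_{i+1}}E_{a,a'}\phi_{a_i,a}\phi_{a,a_i}$, which holds because $\phi_{a_i,a}\phi_{a,a_i}=\phi_{a,a}=1_{E_a}$. That $t^E$ is an isomorphism is immediate since each $t^E_a=\phi_{a,a_{\lambda(a)}}$ has inverse $\phi_{a_{\lambda(a)},a}$. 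For framed $\hat H_n$-invariance of $t^E$, using the trivial framing on $E'$, the condition $\phi'^{E'}_{b,b'} t^E_b \phi^{E}_{b',b} = t^E_{b'}$ reduces to $\phi_{b,a_i}\phi_{b',b}^{E} = \phi_{b',a_i}$ for $\lambda(b)=\lambda(b')=i$, which is again the cocycle identity. Finally I would record that $E'$ is itself a framed $\hat{H}_n$-invariant representation with trivial framing isomorphisms: the commuting diagram in Definition \ref{def:rest_rep} for $E'$, with all $\phi'$ equal to identities, reads $E'_{a,a'} = E'_{c,c'}$ whenever $\lambda(a,a')=\lambda(c,c')$ and $\lambda(a)=\lambda(c)$, and this is exactly the statement that $E$ satisfies its own framed-invariance diagram, rewritten through the definition of $E'_{a,a'}$.

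The only genuinely delicate point is the well-definedness of $E'$ as a representation over $A_{n-1}^{\otimes n}$ when $E$ is supported across indices $0,\dots,n-1$: one has to see that the formula for $E'_{a,a'}$ is independent of nothing (it isn't — it is a fixed formula) but that the relations holding in $\fmod A_{n-1}^{\otimes n}$ are preserved, i.e.\ the commuting-square relations on composable arrows. I expect this to be the main obstacle, since a rectangle $a\to a'\to d$, $a\to c\to d$ with $\lambda(a,a')=\lambda(c,d)=j$, $\lambda(a,c)=\lambda(a',d)=k$ forces one to interleave two applications of the framed-invariance diagram together with the cocycle relations for $\phi$; the computation is routine but must be done carefully to confirm that the inserted $\phi$'s telescope. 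Everything else is a direct consequence of the three identities $\phi_{a,a}=1$, $\phi_{b,a}\phi_{a,b}=1$, $\phi_{b,c}\phi_{a,b}=\phi_{a,c}$ already extracted in the paragraph preceding the lemma.
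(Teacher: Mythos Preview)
Your proposal is correct and follows essentially the same approach as the paper: define $E'_a:=E_{a_{\lambda(a)}}$ and $E'_{a,a'}:=\phi_{a',a_{i+1}}E_{a,a'}\phi_{a_i,a}$, then verify in turn that $E'$ has trivial framing (i.e.\ $E'_{a,a'}=E'_{c,c'}$ when the arrows share label and source index), that the commuting relations hold, that $t^E$ is a morphism, and that $t^E$ is framed $\hat H_n$-invariant. The paper's proof is terser but checks the same list in the same order; your remark that the commuting-square check is the ``delicate point'' slightly overstates it, since once the inner $\phi$'s cancel via $\phi_{a_{i+1},a'}\phi_{a',a_{i+1}}=1$ the relation reduces immediately to the commuting relation for $E$ itself.
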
 
 \begin{proof}
   For vertices $b_{i}$ of $A_{n-1}^{\otimes n}$ with
   $\lambda(b_{i})=i$, we let $E'_{b_{i}}:=E_{a_{i}}$, and for arrows
   $b_{i}\to b_{i+1}$, we let
   $E'_{b_{i},b_{i+1}}:=\phi_{b_{i+1},a_{i+1}}E_{b_{i},b_{i+1}}\phi_{a_{i},b_{i}}$.
   For vertices $b_{i}',b_{i+1}'$ with
   $\lambda(b_{i},b_{i+1})=\lambda(b'_{i},b'_{i+1})$, we have
   $E'_{b_{i},b_{i+1}}=E'_{b'_{i},b'_{i+1}}$. For vertices $b'_{i}$
   with $\lambda(b_{i-1},b_{i})=\lambda(b'_{i},b_{i+1})$ and
   $\lambda(b_{i-1},b'_{i})=\lambda(b_{i},b_{i+1})$, we have
   $E'_{b_{i},b_{i+1}}E'_{b_{i-1},b_{i}}
   =E'_{b'_{i},b_{i+1}}E'_{b_{i-1},b'_{i}}$.  For arrows $b_{i}\to
   b_{i+1}$, we have $t_{b_{i+1}}E_{b_{i},b_{i+1}}= E'_{b_{i},b_{i+1}}
   t_{b_{i}}$.  For vertices $b_{i}'$ with
   $\lambda(b_{i})=\lambda(b_{i}')$, we have
   $\phi^{E'}_{b_{i},b_{i}'}t^{E}_{b_{i}}\phi_{b_{i}',b_{i}}^{E}
   =t^{E}_{b_{i}'}$.
  \end{proof}

  Let us recall the quiver $B_{n-1}$, called the Be{\u\i}linson quiver
  of $\bP^{n-2}$ \cite{Bei,Min}, which has $n$ vertices $0,\ldots,
  n-1$ and $n-1$ arrows from $i$ to $i+1$ with commuting relations.
  For vertices $i,i+1$, we label arrows by $s$ for $1 \leq s \leq n$
  so that for maps $E_{i,i+1}^{s}$ on labelled arrows
  $i\stackrel{s}{\to}i+1$, we have $E_{i,i+1}^{s} E_{i-1,i}^{s'}
  =E_{i,i+1}^{s'} E_{i-1,i}^{s}$.

 \begin{prop}\label{prop:eq}
   The full subcategory of $\fmod A_{n-1}^{\otimes n}$ consisting of
   representations supported over vertices with indices $0,
   \ldots,n-1$ is equivalent to $\mo B_{n-1}$.
 \end{prop}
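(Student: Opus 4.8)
The plan is to produce the equivalence in two stages: first use Lemma~\ref{lem:triv} to replace an arbitrary framed $\hat{H}_{n}$-invariant representation by an isomorphic one whose framing isomorphisms are all trivial, and then observe that such representations are literally the same data as representations of $B_{n-1}$.

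For the first stage, let $\cC$ be the full subcategory of $\fmod A_{n-1}^{\otimes n}$ of representations supported over the vertices of indices $0,\ldots,n-1$ (the category appearing in the statement), and let $\cC_{0}\subseteq\cC$ be the full subcategory of those whose framing isomorphisms $\phi_{a,b}$ are all identity maps. Being a full subcategory, the inclusion $\cC_{0}\hookrightarrow\cC$ is fully faithful, and it is essentially surjective because for $E\in\cC$ the isomorphism $t^{E}\colon E\to E'$ of Lemma~\ref{lem:triv} is a morphism of $\fmod A_{n-1}^{\otimes n}$ with $E'\in\cC_{0}$. So $\cC_{0}\simeq\cC$, and it is enough to identify $\cC_{0}$ with $\mo B_{n-1}$.

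For the second stage I read off the dictionary. If $E\in\cC_{0}$, then $E_{a}=E_{b}$ whenever $\lambda(a)=\lambda(b)$, and the intertwining relation of Definition~\ref{def:rest_rep} with the $\phi$'s trivial forces $E_{a,a'}=E_{c,c'}$ whenever $\lambda(a)=\lambda(c)$ and $\lambda(a,a')=\lambda(c,c')$; so the data of $E$ amounts to vector spaces $V_{0},\ldots,V_{n-1}$ together with a map $g_{i}^{s}\colon V_{i}\to V_{i+1}$ for each $0\leq i\leq n-2$ and each direction $1\leq s\leq n$, where one uses that a vertex of index $i\leq n-2$ has every coordinate $\leq n-2$, so its direction-$s$ arrow $a\to a+e_{s}$ (with $\lambda(a+e_{s})=i+1\leq n-1$) always exists, and that the rigidity above makes $g_{i}^{s}$ independent of the chosen source vertex. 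The commuting relations of $A_{n-1}^{\otimes n}$ recalled in Section~\ref{sec:mir} are the squares $E_{a+e_{j},\,a+e_{j}+e_{j'}}E_{a,\,a+e_{j}}=E_{a+e_{j'},\,a+e_{j}+e_{j'}}E_{a,\,a+e_{j'}}$ for $j\neq j'$; for $\lambda(a)=i\leq n-3$ this reads $g_{i+1}^{j'}g_{i}^{j}=g_{i+1}^{j}g_{i}^{j'}$, which is exactly the relation $E_{i+1,i+2}^{j'}E_{i,i+1}^{j}=E_{i+1,i+2}^{j}E_{i,i+1}^{j'}$ defining $B_{n-1}$, while a square based at a vertex of index $n-2$ is vacuous, its terminal vertex of index $n$ carrying the zero space. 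Conversely, the same dictionary sends a representation of $B_{n-1}$ to a framed $\hat{H}_{n}$-invariant representation with trivial framing supported over indices $0,\ldots,n-1$. On morphisms, the condition $\phi^{F}_{b,b'}f_{b}\phi^{E}_{b',b}=f_{b'}$ reduces for $E,F\in\cC_{0}$ to $f_{b}=f_{b'}$ whenever $\lambda(b)=\lambda(b')$, so a framed $\hat{H}_{n}$-invariant morphism is precisely a tuple of maps $V_{i}\to W_{i}$, and compatibility with the arrows of $A_{n-1}^{\otimes n}$ is compatibility with the arrows of $B_{n-1}$. This gives an isomorphism of categories $\cC_{0}\cong\mo B_{n-1}$, which composed with the equivalence of the first stage yields the proposition.

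The bookkeeping here is routine; the one genuinely delicate point — there is no deep obstacle — is that the window of indices $0,\ldots,n-1$ is matched exactly by the vertices of $B_{n-1}$, so that no arrow or relation of $A_{n-1}^{\otimes n}$ incident to a vertex of index below $0$ or above $n-1$ is either lost or turned into a spurious constraint on the $g_{i}^{s}$, and conversely that every defining relation of $B_{n-1}$ is realised by such a square. The only other thing to be scrupulous about is that the first stage really delivers an equivalence of categories and not merely a bijection on isomorphism classes; organising it as a full-subcategory inclusion that happens to be essentially surjective makes functoriality and naturality automatic, and in particular the choice of representative vertices $a_{i}$ in Lemma~\ref{lem:triv} never enters.
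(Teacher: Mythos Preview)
Your argument is correct and follows essentially the same route as the paper: both use Lemma~\ref{lem:triv} to reduce to representations with trivial framing, and then identify those with representations of $B_{n-1}$ via the dictionary $E_{\lambda(a)}\leftrightarrow E_a$, $E_{\lambda(a),\lambda(b)}^{\lambda(a,b)}\leftrightarrow E_{a,b}$. The only difference is organizational---you factor the equivalence as $\cC\simeq\cC_{0}\cong\mo B_{n-1}$ using a full essentially-surjective subcategory, whereas the paper writes down explicit functors $F,G$ in each direction and checks the natural isomorphism $FG\cong\id$ via the maps $t^{E}$ of Lemma~\ref{lem:triv} directly.
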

 \begin{proof}
   For each representation $E$ of $B_{n-1}$, we put the representation
   $F(E)$ of $A_{n-1}^{\otimes n}$ as follows.  For vertices $a,b$ of
   $A_{n-1}^{\otimes n}$ with arrows $a\to b$, we put linear maps
   $F(E)_{a,b}:=E_{\lambda(a),\lambda(b)}^{\lambda(a,b)}$ from
   $F(E)_{a}:=E_{\lambda(a)}$ to $F(E)_{b}:=E_{\lambda(b)}$.  For each
   morphism $f:E\to E'$ of representations $E$ and $E'$ of $B_{n-1}$,
   we put $F(f):F(E)\to F(E')$ by $F(f)_{a}:=f_{\lambda(a)}$ for each
   vertex $a$.
   
   To obtain an inverse $G$ of $F$, let $a_{i}$ be vertices of
   $A_{n-1}^{\otimes n}$ such that $\lambda(a_{i})=i$ for $0\leq i
   \leq n-1$. For $E$ of $\fmod A_{n-1}^{\otimes n}$ supported over
   vertices with the indices, we put $G(E)_{i}:=E_{a_{i}}$, and for
   arrows $a\to b$ such that $\lambda(a)=\lambda(a_{i})$, we put the
   linear map $G(E)_{\lambda(a_{i}),\lambda(a_{i+1})}^{\lambda(a,b)}:=
   \phi_{b,a_{i+1}}E_{a,b}\phi_{a_{i},a}$, which is independent for
   choices of such arrows $a\to b$.  For $1\leq c,c'\leq n-1$ and
   $0\leq i-1\leq n-3$, let us note that we have a vertex
   $b_{i-1},b_{i},b'_{i}, b_{i+1}$ of $A_{n-1}^{\otimes n}$ with
   $\lambda(b_{i-1})=i-1$,
   $\lambda(b_{i-1},b_{i})=\lambda(b'_{i},b_{i+1})=c$, and
   $\lambda(b_{i-1},b'_{i})=\lambda(b_{i},b_{i+1})=c'$; so, we have
   $G(E)_{i,i+1}^{c'} G(E)_{i-1,i}^{c}= G(E)_{i,i+1}^{c}
   G(E)_{i-1,i}^{c'}$.  For a framed $\hat{H}_{n}$-invariant morphism
   $f:E\to E'$, we put $G(f):G(E)\to G(E')$ such that $G(f)_{i}=
   f_{a_{i}}$; we have $G(E')_{i,i+1}^{c}G(f)_{i}=G(f)_{i+1}
   G(E)_{i,i+1}^{c}$.

   For the functor $FG$, a framed $\hat{H}_{n}$-invariant morphism
   $f:E_{1}\to E_{2}$, and $t^{E_{1}},t^{E_{2}}$ in the notation of
   Lemma \ref{lem:triv}, we have
   $t^{E_{1}}_{a}f_{a}=\phi^{E_{1}}_{a,a_{\lambda(a)}}f_{a}=f_{a_{\lambda(a)}}
   \phi^{E_{2}}_{a, a_{\lambda(a)}} =FG(f)_{a} t^{E_{2}}_{a}$ for each
   vertex $a$ of $A_{n-1}^{\otimes n}$.  For the functor $GF$, we take
   the identity functor on $\fmod A_{n-1}^{\otimes n}$.
 \end{proof}

 Similar statements to the one in Proposition \ref{prop:eq} can be
 obtained by taking other supporting vertices.  For $n=3$ and $4$, let
 us mention that derived categories $\Db(\Coh \bP^{n-2})$, which is
 derived equivalent to $\Db(\mo B_{n-1})$, have been described in
 terms of graded Lagrangian vanishing cycles and Lagrangian
 intersection theories in \cite{AurKatOrl08}.  With Serre twists in
 use, let us define the {\it Serre-de Rham functor} $\SdR$ as follows.

 \begin{defin}\label{def:twisted}
   For each representation $E$ of $\mo B_{n-1}$, we put the chain
   $\SdR(E)$ of morphisms $\sum_{1 \leq j\leq n}E_{i,i+1}^{j} \otimes
   d x_{n,i}^{j}: E_{i}\otimes O_{n}^{i} \to E_{i+1}\otimes
   O_{n}^{i+1}$ for $0\leq i\leq n-2$. For each morphism $f:E\to F$ in
   $\mo B_{n-1}$, we put the chain map $\SdR(f):\SdR(E)\to \SdR(F)$
   such that $f_{i}\otimes \id_{O_{n}^{i}}:E_{i}\otimes O_{n}^{i} \to
   F_{i}\otimes O_{n}^{i}$ for $0\leq i \leq n-1$.
 \end{defin}

 The following is in order.

 \begin{prop}
   For each representation $E$ of $\mo B_{n-1}$, we have that
   $\SdR(E)$ is a complex and $\SdR$ is a functor from $\mo B_{n-1}$
   to $\Db(\Coh X_{n})$.
 \end{prop}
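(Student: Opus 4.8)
The plan is to verify the two assertions separately: first that $\SdR(E)$ is a genuine complex (consecutive differentials compose to zero), and then that $\SdR$ respects composition of morphisms and identities, hence is a functor into $\Db(\Coh X_{n})$. For the first point, I would compute the composite $\SdR(E)_{i+1}\circ\SdR(E)_{i}$ as a map $E_{i}\otimes O_{n}^{i}\to E_{i+2}\otimes O_{n}^{i+2}$. Writing it out, this composite equals $\sum_{j,j'}\bigl(E_{i+1,i+2}^{j'}E_{i,i+1}^{j}\bigr)\otimes\bigl(d x_{n,i+1}^{j'}\,d x_{n,i}^{j}\bigr)$. I would then exploit the two sets of relations already established in the excerpt: on the quiver side, the commuting relations for $\mo B_{n-1}$ give $E_{i+1,i+2}^{j'}E_{i,i+1}^{j}=E_{i+1,i+2}^{j}E_{i,i+1}^{j'}$, so the scalar coefficients are symmetric under $j\leftrightarrow j'$; on the geometric side, the stated anti-commutation $d x_{n,i+1}^{j'}\,d x_{n,i}^{j}=-d x_{n,i+1}^{j}\,d x_{n,i}^{j'}$ in $\Ext^{\bullet}(O_{n}^{i},O_{n}^{i+2})$ makes the morphism part antisymmetric. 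A symmetric tensor paired against an antisymmetric tensor sums to zero, so $\SdR(E)_{i+1}\circ\SdR(E)_{i}=0$ and $\SdR(E)$ is a complex over the objects $O_{n}^{0},\ldots,O_{n}^{n-1}$, which indeed live in $\Db(\Coh X_{n})$.

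For the functoriality claim, I would check that $\SdR(f)$ is a chain map, i.e. that the square
\begin{equation*}
\begin{matrix}
E_{i}\otimes O_{n}^{i} & \longrightarrow & E_{i+1}\otimes O_{n}^{i+1}\\
\downarrow & & \downarrow\\
F_{i}\otimes O_{n}^{i} & \longrightarrow & F_{i+1}\otimes O_{n}^{i+1}
\end{matrix}
\end{equation*}
commutes, where the verticals are $f_{i}\otimes\id$ and the horizontals are the $\SdR$-differentials. Since $f$ is a morphism of $B_{n-1}$-representations we have $f_{i+1}E_{i,i+1}^{j}=F_{i,i+1}^{j}f_{i}$ for every label $j$; tensoring with $d x_{n,i}^{j}$ and summing over $j$ gives exactly the commutativity of the square. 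Compatibility with composition, $\SdR(g\circ f)=\SdR(g)\circ\SdR(f)$, and with identities, $\SdR(\id_{E})=\id_{\SdR(E)}$, follow componentwise from $(g\circ f)_{i}=g_{i}\circ f_{i}$ and $(\id_{E})_{i}=\id_{E_{i}}$, respectively, after tensoring each component with $\id_{O_{n}^{i}}$. Thus $\SdR$ is a well-defined functor $\mo B_{n-1}\to\Kom(\Coh X_{n})$, and composing with the localization to the derived category lands it in $\Db(\Coh X_{n})$.

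The only genuinely substantive step is the vanishing $\SdR(E)_{i+1}\circ\SdR(E)_{i}=0$; everything else is bookkeeping. The subtle point there is the interplay between the two sign conventions: the $B_{n-1}$-relations are \emph{commuting} (so the algebraic coefficients are symmetric), whereas the morphisms $d x_{n,i}^{j}$ anti-commute in the Yoneda algebra of the $O_{n}^{i}$, as recalled before Definition~\ref{def:twisted}. One must be slightly careful that the anti-commutation relation is the correct one to apply to the composite $d x_{n,i+1}^{j'}\,d x_{n,i}^{j}$ as an element of $\Ext^{2}(O_{n}^{i},O_{n}^{i+2})$ and that the sign is not absorbed or doubled by the tensor-product conventions on $E_{i}\otimes O_{n}^{i}$; once the conventions are fixed, the symmetric-times-antisymmetric argument closes the computation immediately. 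I expect this sign-tracking to be the main (and essentially only) obstacle.
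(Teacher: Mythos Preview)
Your proof is correct and follows exactly the approach of the paper, only spelled out in more detail: the paper's proof merely states that the commuting relations on $E$ ``translate into zero compositions of consecutive morphisms'' (your symmetric-times-antisymmetric cancellation) and that ``morphisms between objects of $\mo B_{n-1}$ become morphisms of complexes'' (your chain-map verification). Your explicit unpacking of the sign interplay between the commuting $E_{i,i+1}^{j}$ and the anti-commuting $d x_{n,i}^{j}$ is precisely the content the paper leaves implicit.
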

 \begin{proof}
   Each $E$ satisfies commuting relations among maps. This translates
   into zero compositions of consecutive morphisms.  Morphisms between
   objects of $\mo B_{n-1}$ become morphisms of complexes.
 \end{proof}
 
 For a given framed $\hat{H}_{n}$-invariant stability function $Z$ on
 $\mo A_{n-1}^{\otimes n}$, we define the stability function $Z'$ on
 $\mo B_{n-1}$ by putting $Z'(i):=Z(a)$ for a vertex $a$ of
 $A_{n-1}^{\otimes n}$ such that $\lambda(a)=i$. In the following, we
 write $Z'$ on $\mo B_{n-1}$ also as $Z$ on $\mo B_{n-1}$.
 
 \begin{thm}\label{thm:syz}
   For a stability function $Z_{n}$ on the category of framed
   $\hat{H}_{n}$-invariant representations of $A_{n-1}^{\otimes n}$,
   we have a moduli space of stable Lagrangian connect sums of graded
   Lagrangian vanishing cycles for a morsification of $X_{n}$ such
   that the Serre-de Rham functor localizes the moduli space into the
   Calabi-Yau Fermat variety in $\bP^{n-1}$. For the mirror of $Z_{n}$
   on the category of representations of $A_{n-1}^{n}$, each graded
   Lagrangian vanishing cycle is stable.
 \end{thm}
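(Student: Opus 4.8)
The plan is to attack the two assertions of Theorem \ref{thm:syz} separately, since they concern the two opposite endpoints of the wall-crossing path connecting $Z_n$ and its mirror $-\bar{Z_n}$ on $\Stab \Db(\mo A_{n-1}^{\otimes n})$. For the mirror statement (the easier of the two), I would first observe that under $-\bar{Z_n}$ the slope ordering of simple representations is reversed relative to $Z_n$: since central charges of simples of index $i$ all have the same value and their slopes strictly decrease in $i$ under $Z_n$, applying $-\bar{\cdot}$ reflects across the imaginary axis, so slopes now strictly \emph{increase} in $i$. The graded Lagrangian vanishing cycles correspond, under $\FS(X_n)\cong\Db(\mo A_{n-1}^{\otimes n})$, to the simple representations $S_a$ (up to shift), and the point is that each $S_a$ is then stable: any nonzero subobject or quotient in $\mo A_{n-1}^{\otimes n}$ of a simple object is the object itself, so simples are automatically $\sigma$-semistable for \emph{any} stability function, and stability (strict) follows because the only way a simple fails to be stable is if it is an iterated extension — which it is not. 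More carefully, what needs checking is that under $-\bar{Z_n}$ there is no destabilizing \emph{short exact sequence}, but simples have none; so each graded Lagrangian vanishing cycle, being (the image of) a simple, is stable. I would also note that the framing condition is vacuous here: a simple $S_a$ has one-dimensional support at a single vertex, so there is nothing to frame.

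For the first, substantive statement, the strategy is to transport the moduli problem across the chain of equivalences
\[
\fmod A_{n-1}^{\otimes n}\big|_{\text{indices }0,\dots,n-1}\;\simeq\;\mo B_{n-1}\;\xrightarrow{\ \SdR\ }\;\Db(\Coh X_n),
\]
supplied by Proposition \ref{prop:eq} and Definition \ref{def:twisted}. Under $Z_n$ on $\fmod A_{n-1}^{\otimes n}$, the induced function $Z$ on $\mo B_{n-1}$ has strictly decreasing slopes on the simples $S_0,\dots,S_{n-1}$ of $B_{n-1}$; this is exactly the stability condition on $\mo B_{n-1}$ (equivalently on $\Db(\mo B_{n-1})\cong\Db(\Coh\bP^{n-2})$) whose moduli of stable representations with the dimension vector $(1,1,\dots,1)$ recovers $\bP^{n-2}$ via Be{\u\i}linson — this is King's construction, \cite{Kin}, for the total-order $\theta$-stability on the Be{\u\i}linson quiver. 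So I would: (i) identify the relevant component of the moduli stack as the $Z$-stable locus of dimension vector $(1,\dots,1)$ in $\mo B_{n-1}$ and invoke \cite{Kin} (or the classical Be{\u\i}linson resolution) to identify it with $\bP^{n-2}$; (ii) reinterpret a framed $\hat H_n$-invariant stable representation of $A_{n-1}^{\otimes n}$ of this dimension vector as a Lagrangian connect sum of graded Lagrangian vanishing cycles, using the $A_\infty$-identification of $\FS(X_n)$ with $\Db(\mo A_{n-1}^{\otimes n})$ recalled in Section 3 (the maps $E^{j}_{i,i+1}$ being the weights on the intersection points $dx^{j}_{n,i}$); (iii) observe that $\SdR$ sends such a representation to the Be{\u\i}linson-type complex $\bigoplus_i E_i\otimes O_n^{i}$ with differential $\sum_j E^{j}_{i,i+1}\otimes dx^{j}_{n,i}$, which for dimension vector $(1,\dots,1)$ and stable $E$ is quasi-isomorphic to a skyscraper-type object — more precisely to the image under Be{\u\i}linson's functor of a point of the Fermat hypersurface $\{X_n=0\}\subset\bP^{n-1}$, because the $dx^{j}_{n,i}$ are the \emph{restrictions} of $\Omega^{\bullet}_{\bP^{n-1}}$-resolution differentials to the Fermat variety. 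This last point is where the anti-commutation relations $dx^{j}_{n,i}dx^{j'}_{n,i-1}=-dx^{j'}_{n,i}dx^{j}_{n,i-1}$ recorded in Section 3, combined with the commuting relations of $B_{n-1}$, conspire to make $\SdR(E)$ an honest complex whose cohomology is supported exactly on $\{X_n=0\}$ — so that $\SdR$ "localizes" the moduli space onto the Calabi-Yau Fermat variety in $\bP^{n-1}$ rather than all of $\bP^{n-2}$.

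I expect the main obstacle to be step (iii): making precise the sense in which $\SdR$ "localizes" the $\bP^{n-2}$-worth of stable $B_{n-1}$-representations onto the $(n-2)$-dimensional Fermat hypersurface in $\bP^{n-1}$. The subtlety is that $\SdR$ lands in $\Db(\Coh X_n)$ (the derived category of the Fermat variety itself, which has dimension $n-2$), not in $\Db(\Coh\bP^{n-1})$, so one must check: first, that $O_n^{i}=\Omega^{n-1-i}_{\bP^{n-1}}(n-1-i)[i]|_{X_n}$ still generates enough of $\Db(\Coh X_n)$ for the Be{\u\i}linson spectral sequence argument to go through after restriction (this is where the Calabi-Yau condition $\deg X_n=n$ enters, ensuring $O_n^{0}=\omega_{X_n}^{-1}=\mathcal O_{X_n}$ and making the restricted collection behave like an exceptional-type collection up to the Serre functor — hence the name "Serre-de Rham"); and second, that the stable representations, which a priori parametrize points of $\bP^{n-2}=\bP(H^0(\bP^{n-1},\mathcal O(1))^\vee/\!\sim)$, actually only produce nonzero (indecomposable) objects of $\Db(\Coh X_n)$ when the corresponding "point" lies on $\{X_n=0\}$ — for points off the hypersurface, the complex $\SdR(E)$ becomes acyclic after restriction because the Koszul-type differential built from the $dx^{j}_{n,i}$ becomes exact. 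I would handle this by a direct local computation: trivialize over a standard affine chart of $\bP^{n-1}$, write the differential as a Koszul complex on the partial derivatives $\partial X_n/\partial x_k = n x_k^{n-1}$, and observe its cohomology is the structure sheaf of the common zero locus of these together with $X_n$ itself — which on the smooth Fermat variety is a reduced point exactly when $X_n$ vanishes there. The remaining steps (i)–(ii) are essentially bookkeeping: (i) is King \cite{Kin} plus Be{\u\i}linson, and (ii) is the unwinding of Definitions \ref{def:rest}, \ref{def:rest_rep} via Proposition \ref{prop:eq} and Lemma \ref{lem:triv}, together with the dictionary between representations of $A_{n-1}^{\otimes n}$ and Lagrangian connect sums from Section 3.
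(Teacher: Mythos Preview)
Your overall architecture matches the paper's proof: reduce via Proposition~\ref{prop:eq} to stable $(1,\dots,1)$-representations of $B_{n-1}$, recognize $\SdR(E)$ as (the restriction of) a Koszul resolution of a point, and treat the mirror assertion by the trivial observation that simples are stable. The paper is terser---it extracts the point directly from commuting relations plus indecomposability (the scalars $k^E_i$ with $k^E_i E^{s}_{i,i+1}=E^{s}_{i-1,i}$) rather than citing King---but the route is the same.

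Two details in your step~(iii) are off, and the second would actually derail the computation. First, the moduli of stable $(1,\dots,1)$-representations of $B_{n-1}$ is $\bP^{n-1}$, not $\bP^{n-2}$: there are $n$ arrows $i\stackrel{s}{\to}i+1$ (labels $1\leq s\leq n$, matching the $n$ coordinate directions of $A_{n-1}^{\otimes n}$), so after gauge the class of $E$ is the point $[E^{1}_{0,1}:\cdots:E^{n}_{0,1}]\in\bP^{n-1}$. This is also what the theorem requires dimensionally---you cannot ``localize'' $\bP^{n-2}$ onto a degree-$n$ hypersurface in $\bP^{n-1}$. Second, the differential of $\SdR(E)$ is \emph{not} a Koszul complex on the partial derivatives $\partial X_n/\partial x_k=nx_k^{n-1}$; their common zero locus in $\bP^{n-1}$ is empty, so that description would give the zero object everywhere. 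The $dx^{j}_{n,i}$ are the Be{\u\i}linson/Euler contraction maps $\Omega^{n-1-i}(n-1-i)\to\Omega^{n-2-i}(n-2-i)$, and for $E$ with ratios $[c_1:\cdots:c_n]$ the complex $\SdR(E)$ is, before restriction, the standard Koszul resolution of $\cO_{[c]}$ on $\bP^{n-1}$. Restricting to $X_n=\{x_1^n+\cdots+x_n^n=0\}$ then yields $\cO_{[c]}$ when $[c]\in X_n$ and the zero object when $[c]\notin X_n$---this is the mechanism the paper invokes (``unless isomorphic to the zero object outside''), and it is what ``localizes'' $\bP^{n-1}$ onto the Fermat hypersurface.
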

\begin{proof}
  For a stability function $Z_{n}$ on $\mo B_{n-1}$ and stable
  representations $E$ of $B_{n-1}$ with the dimension vector
  $(1,\ldots, 1)$, commuting relations and the indecomposable property
  of the representation give nonzero $k^{E}_{i}$ such that $k^{E}_{i}
  E_{i,i+1}^{s}=E_{i-1,i}^{s}$ for $1\leq s \leq n$ and $0 \leq i \leq
  n-1$.

  If objects $O_{n}^{i}$ were unrestricted on the Calabi-Yau Fermat
  variety, then $\SdR(E)$ would have been Koszul resolutions of
  skyscraper sheaves of points in $\bP^{n-1}$. For non-isomorphic
  representations $E$ of $\mo B_{n-1}$, objects $\SdR(E)$ of $\Db(\Coh
  X_{n})$ are non-isomorphic objects supported over distinct points of
  the Calabi-Yau Fermat variety, unless isomorphic to the zero object
  outside.

  For the mirror of $Z_{n}$ on $\mo A_{n-1}^{\otimes n}$, each stable
  object is isomorphic to a simple object of $\mo A_{n-1}^{\otimes
    n}$.
 \end{proof}

 Stable objects of Theorem \ref{thm:syz} in terms of the mirror
 $Z_{n}$ are simple representations of the quiver $A_{n-1}^{\otimes
   n}$.  We may as well take the mirror of $Z_{n}$ on the category of
 framed ones of $A_{n-1}^{\otimes n}$; in this case, we obtain
 polysimple representations of $A_{n-1}^{\otimes n}$ consisting of
 graded Lagrangian vanishing cycles.

 For projective spaces, and for Calabi-Yau hypersurfaces of
 $\bP^{n-1}$ represented as $x_{1}^{n}+\ldots + x_{n}^{n}+\psi \cdot
 x_{1}\cdots x_{n}:\bC^{n}\to \bC$ for $\psi \in \bC$, we may obtain
 similar statements to Theorem \ref{thm:syz}.
 
\section*{Acknowledgements}
The author thanks Research Institute for Mathematical Sciences of
Kyoto University and Institut des Hautes \'Etudes Scientifiques for
providing him with excellent research environment.  He thanks
Professors Carqueville, Fukaya, Hori, Kajiura, Keller, Y. Kimura,
Kontsevich, Nakajima, Ohashi, Ohkawa, Stoppa, A. Takahashi, Toda,
Usnich, Weist, and Yamazaki for their useful discussions or
stimulating talks related to this paper.

\end{document}